\documentclass{article}
\usepackage{graphicx} 
\usepackage{amsmath,amssymb,amsthm,graphicx,float,tikz-cd}
\usepackage{caption}
\usepackage{subcaption}
\usepackage{psfrag,url}
\usepackage{enumerate}
\usepackage{hyperref}
\hypersetup{
    colorlinks=true, 
    linkcolor=blue, 
    urlcolor=black, 
    citecolor=blue,
    linktoc=all 
}

\usepackage[normalem]{ulem}
\usepackage{tikz}
\usetikzlibrary{arrows.meta,hobby}

\usepackage{makecell}
\usepackage{tikz-cd}
\usetikzlibrary{arrows.meta}
\tikzset{
	>={Stealth[scale=1.1]},
	thinsquiggly/.style={
		draw=blue,
		thin
	},
	bluesquigglybi/.style={thinsquiggly, <->},
	bluesquigglyup/.style={thinsquiggly, ->},
	bluedashedbi/.style={orange, <->, thin},
	bluedashedin/.style={orange, <-, thin},
	blackbi/.style={<->, thin},
	blackdown/.style={->, thin}
}

\newcommand{\calB}{\mathcal{B}}

\newcommand{\C}{\mathbb{C}}

\renewcommand{\H}{\mathbb{H}}

\newcommand{\R}{\mathbb{R}}

\newtheorem{thm}{Theorem}[section]

\newtheorem{lemma}[thm]{Lemma}

\theoremstyle{definition}
\newtheorem{remark}[thm]{Remark}

\numberwithin{equation}{section}

\begin{document}

\title{Conformally Invariant  Besov Spaces on Chord-Arc Domains}

\author{
Tailiang Liu
\thanks{
School of Mathematics and Physics, Jiangsu University of Technology, Changzhou 213001, China~Email: ltlmath@jsut.edu.cn \ } 
~Yuliang Shen
\thanks{ Department of Mathematics, Soochow University, Suzhou 215006, China.
 ~Email: ylshen@suda.edu.cn}
~Yaosong Yang 
\thanks{
Beijing International Center for Mathematical Research (BICMR), Beijing 100871, China ~Email: yaosongyang@bicmr.pku.edu.cn}
}

\date{}

\maketitle

 \begin{abstract}
Let $\Gamma$ be a rectifiable Jordan curve. We introduce the associated Besov $p$-spaces on $\Gamma$ and in its complementary domains, and characterize chord-arc curves by the natural trace isomorphisms among these spaces. We prove the characterization for the previously unresolved range $1<p<2$, thereby extending the known results $p\geq2$ to $1<p<\infty$. We further give an alternative proof for $p\geq2$.
\\

\noindent \textbf{Keywords:} Besov space, chord-arc curve,  quasidisk, conformal invariance.

\noindent \textbf{MSC 2020:} 30C62, 30E25, 30H25, 42B35.
 \end{abstract}

\section{Introduction} \label{1}
Recall that the  Besov $p$-space $I_{p}(\mathbb{H})$ on   the upper half-plane $\mathbb{H}$ is the set of all harmonic functions $u$ such that \begin{equation*}
     I_p(u, \mathbb{H}) := \|u\|^p= \int_{\mathbb{H}} |\nabla u(z)|^p y^{p-2} dm(z) <\infty,
    \end{equation*}
  where  $dm(z)$ denotes the two-dimensional Lebesgue measure $dxdy$.  The classical Möbius invariance of the Besov energy  provides the essential framework for developing composition operator and generalized Carleson measure theory on this space (see \cite{MobInvariFS85,BookZhuKehe}), and  $p$-integrable Teichmüller spaces theory (see \cite{Pintegrable00Guihui, WP, Integrable13Tang, ComplexstrucOnp14}).

The corresponding boundary Besov space $B_p(\mathbb{R})$ (or the trace space) on the real line $\mathbb R$ is the set of all  measurable functions  $f$ such that \begin{equation*}\|f\|^p_{B_p(\R)} = \int_{\mathbb{R}}\int_{\mathbb{R}}  \frac{|f(x)-f(y)|^p}{|x-y|^2} dx dy<\infty.\end{equation*}
Then we obtain the full chain of norm equivalences (see \cite{BookStein}):
\begin{equation} \label{eq:equivalence_chain}
I_p(P^+u, \mathbb{H}) \asymp I_p(P^-u, \mathbb{L}) \asymp \|u\|^p_{B_p},
\end{equation} where $P^+u$ and $P^-u$ denote  the harmonic extensions of $u$ to $\mathbb{H}$ and the lower half plane $\mathbb{L}$.

The definitions of the Besov spaces admit direct generalizations to a simply connected domain $\Omega \subset \C$ with a rectifiable boundary $\Gamma = \partial\Omega$. For a harmonic function $u$ on $\Omega$, the  Besov norm is defined by
\begin{equation*}
I_p(u, \Omega) =\int_{\Omega} |\nabla u(z)|^p \delta(z)^{p-2} dm(z), 
\end{equation*}
where $\delta(\cdot)$ denotes the distance function with respect to $\Gamma$. Correspondingly, the trace Besov space $B_p(\Gamma)$ on the boundary curve $\Gamma$ is the set of all measurable functions $f$ such that 
\begin{equation*}\|f\|^p_{B_p(\Gamma)} =  \int_{\Gamma} \int_{\Gamma} \frac{|f(w)-f(z)|^p}{|w-z|^2} |dw| |dz| <\infty.
\end{equation*}

It is natural to ask to what extent these norm equivalences \eqref{eq:equivalence_chain}  remain valid for  general simply connected domains?  This problem, which arose from the study of the Cauchy integral on quasicircles, was proposed by the authors in  \cite[Problem 5.1]{LS24RH}. Recently, Wei and Zinsmeister  solved this problem  in \cite{WZ25chordarc}, proving that such an equivalence characterizes chord-arc curves, and they also obtained the case $p > 2$ in a recent preprint \cite{WeiZin2024p}. However, their method leaves the case $1 < p < 2$ open.

The main purpose of this paper is to  settling the remaining range $1<p<2$. For completeness, we also present a different proof of the characterization when $p>2$. 
Our result yields the following characterization for every $1<p<\infty$:
\[
\Gamma\text{ is chord-arc curve}
\quad\Longleftrightarrow\quad
B_p(\Gamma)\simeq I_p^1(\Omega^+)\simeq I_p^1(\Omega^-),
\]
where $\Omega^+$ and $\Omega^-$ denote the complementary domains of $\Gamma$. Here the isomorphisms are the natural ones induced by the trace and harmonic-extension operators. 
Throughout the paper, we work with Jordan curves passing through $\infty$ that are rectifiable with respect to the spherical metric. This class of curves corresponds exactly to bounded rectifiable Jordan curves  under a M\"obius transformation mapping \(\infty\) to a finite point.

The paper is organized as follows.  In section 2, we briefly revisit some
basic facts related to quasiconformal mapping theory, and the boundary behaviors of the  Besov space on quasidisks. These results have previously appeared in \cite{Jump24}. Section 3 is primarily devoted 
 to proving the main results of this paper.

In this paper, the notation $A\lesssim B$ $(A\gtrsim B)$ means that there is an independent constant $C$ such that $A\le CB$ $ (A\ge CB)$. The notation $A \asymp B$ means both $A\lesssim B$ and $A\gtrsim B$. Also, $ B(z,r) $ denotes a disk of radius $ r $ centered at the point $ z\in\mathbb{C} $.

\section{Basic facts and Besov spaces on quasidisks} 
A sense-preserving homeomorphism $\rho$  of the complex plane $\mathbb C$  is called quasiconformal if it has locally square integrable distributional derivatives $ \overline{\partial}\rho$, ${\partial}\rho$ which satisfy the Beltrami equation	$ \overline{\partial}\rho=\mu{\partial}\rho, $
where $\mu\in L^{\infty}(\mathbb C)$ with  $\|\mu\|_{\infty}<1$ is called the Beltrami coefficient or complex dilatation of $\rho$. The image of  $ \mathbb{R} $ under a global quasiconformal mapping is called a quasicircle. A Jordan domain is called a quasidisk if it is bounded by a quasicircle. A sense-preserving homeomorphism $h$ of $\mathbb{R}$ is said to be quasisymmetric and belongs to the class $\text{QS}(\mathbb{R})$ if   there exists a  positive constant $C$, called the quasisymmetric constant of $h$,  such
that
\begin{equation*}
	 {C^{-1}}\le{|h(I_1)|}/{|h(I_2)|}\le C
\end{equation*}
for all pairs of adjacent intervals  $I_1$ and $I_2$ on  $\mathbb{R}$  with the same arc-length $|I_1|=|I_2|$. Beurling-Ahlfors proved that
a sense-preserving homeomorphism $h$ of  $\mathbb{R}$ is quasisymmetric if and only if there exists some quasiconformal mapping of $\mathbb{H}$ onto itself which has boundary values $h$ (see \cite{BAextension}). {Throughout this paper, we always assume that any conformal mapping defined on the half-planes $\mathbb{H}$ or $\mathbb{L}$ fixes the point at $\infty$.}   Let $\Gamma$ be a Jordan curve with complementary domains $\Omega^+$ and $\Omega^-$, and let $\phi$ and $\psi$ map $\mathbb{H}$ and $\mathbb{L}$ conformally onto $\Omega^+$ and $\Omega^-$, respectively. Since $\phi$ and $\psi$ can be continuously extended to $\R$, we can form $h_{\Gamma}=\psi^{-1}\circ \phi$, which is known to be a conformal sewing for $\Gamma$. It is well known that $h_{\Gamma}$ is quasisymmetric if and only if $\Gamma$ is a quasicircle (see \cite{AhlforsQR}).

We say  that a locally rectifiable curve $\Gamma$ is Ahlfors regular (also known as Ahlfors-David regular) if its arc-length satisfies $\ell(\Gamma \cap B(z,r)) \le Cr$ for all $z \in \mathbb{C}$ and $r>0$. A locally rectifiable Jordan curve $\Gamma$ passing through $\infty$ is a chord-arc curve if the arc-length between any two finite points $\zeta, z \in \Gamma$ satisfies $\ell(\zeta,z) \le C|\zeta-z|$. Every chord-arc curve is regular, but the converse is false (e.g., a parabola). It is known that a  Jordan curve is a chord-arc curve if and only if it is an Ahlfors regular quasicircle (see \cite{BookBoundarybehavior92}).

It is well-known that the Dirichlet problem has a unique solution for the Besov space.  Specifically, every  $F\in I_p(\H)$ has non-tangential limit values almost everywhere in $\R$ (w.r.t. the arc-length measure) such that $f:= F|_{\R}\in B_p(\R) $ satisfies $\|f\|_{B_p}\asymp I_p(F,\H)$. Conversely, the usual Poisson extension operator $P$ takes each element $f\in B_p(\R) $ to $F:=Pf\in I_p(\H)$  such that $I_p(F,\H) \asymp  \|f\|_{B_p(\R)}$. 

We now extend the Dirichlet problem to a locally rectifiable curve $\Gamma$  with complementary domains $\Omega^+$ and $\Omega^-$. Let $\phi:\mathbb{H} \to \Omega^+$ and $\psi:\mathbb{L} \to \Omega^-$ be the two corresponding Riemann maps that fix $\infty$.  We define $B_p^{\phi}(\Gamma)$ to be the set of all functions $f$ on $\Gamma$ such that
\[\|f\|^p_{B^{\phi}_p(\Gamma)}:=\int_{\R}\int_{\R}\frac{|f\circ \phi(x)-f\circ \phi(y)|^p}{|x-y|^2} dxdy <\infty.\] By the conformal invariance of harmonic measure (see \cite[Chapter 4]{BookBoundarybehavior92}), $f$ is defined almost everywhere in $\Gamma$ w.r.t. the harmonic measure.
Since $\Gamma$ is  a locally rectifiable curve,  $\phi$ is locally absolutely continuous on $\mathbb{R}$ by F. and M. Riesz theorem (see \cite{BookDuren70}). Consequently,  $\phi$ maps sets of zero arc-length measure on $\mathbb{R}$ to sets of zero arc-length measure on $\Gamma$, and $f$ is also defined almost everywhere in $\Gamma$ w.r.t. the arc-length measure. 
Similarly, let $B_p^{\psi}(\Gamma)$ denote all functions $f$ on $\Gamma$ such that $f\circ \psi \in B_p(\R)$. 
Denoting by $\lambda_{\Omega}$ the Poincar\'{e} metric on a simply connected domain $\Omega\subset\C$, we recall the well-known fact that $\lambda_{\Omega}(z) \asymp \delta(z)^{-1}$ for any $z \in \Omega$.
Then for $F \in I_p(\Omega)$, 
\begin{equation}
 \begin{aligned}\label{Eq: transfer}
    \int_{\Omega} |\nabla F(w)|^p \delta(w)^{p-2} dm(w)&\simeq \int_{\Omega} |\nabla F(w)|^p \lambda^{2-p} _{\Omega}(w) dm(w) \\
    &= \int_{\H} |\nabla( F\circ \phi )(z)|^p y^{p-2} dm(z),
\end{aligned}   
\end{equation}
which yields $\widetilde{F}:=F \circ \phi \in I_p(\H)$ and hence has non-tangential limit almost everywhere on $\R\setminus E$, where $E$ is a set with zero arc-length measure. 
 Furthermore, the rectifiability of $\Gamma$ implies that $\phi$ preserves non-tangential regions (see \cite[Prop. 4.10]{BookBoundarybehavior92}). 
Therefore, for any $\zeta \in \Gamma \setminus \phi(E)$, the non-tangential limit of $F$ exists and is determined by pushing forward the limit of  pullback of $F$:
$$ \lim_{w\stackrel{N.T}{\longrightarrow}\zeta} F(w) = \lim_{z\stackrel{N.T}{\longrightarrow}\phi^{-1}(\zeta)} \widetilde{F}(z), \quad z=\phi^{-1}(w). $$ In other words,  $F$ has non-tangential limit values almost everywhere on $\Gamma$ (w.r.t.  the arc-length measure). Let $f := F|_{\Gamma}$. We then have $f \circ \phi = \widetilde{F}|_{\R}$ almost everywhere on $\R$ and
$$ \|f\|_{B^{\phi}_p(\Gamma)} = \|\widetilde{F}|_{\R}\|_{B_p(\R)} \asymp {I_p^1(\widetilde{F},\H)}\asymp I_p^1(F,\Omega^+). $$
Similarly, for any $G\in I_p(\Omega^-)$, letting $g:=G|_{\Gamma}$, we can deduce that $\|g\|_{B^{\psi}_p(\Gamma)} \asymp I_p^1(G,\Omega^-)$. 

    Conversely, let $f\in B_p^\phi (\Gamma) $ so that $\widetilde{f} :=f\circ\phi \in B_p(\R) $. Then $\widetilde{F}:=P{\widetilde{f}}\in I_p(\H)$ and $F:=\widetilde{F}\circ\phi ^{-1} \in I_p(\Omega^+)$ with $I_p^1(F,\Omega^+) \asymp {I_p^1(\widetilde{F},\H)} \asymp\| \widetilde{f}\|_{B_p(\R)}=\|f\|_{B^{\phi}_p(\Gamma)} $. It follows that $F$ has non-tangential limit almost everywhere on $\Gamma$ (w.r.t. the arc-length measure) such that $f=F|_{\Gamma} $. Thus, the extension map $f \mapsto F$ induces a bounded isomorphism $P_+$ from $B_p^\phi(\Gamma)$ onto $I_p(\Omega^+)$, which we denote by $I_p(\Omega^+) \simeq B_p^\phi(\Gamma)$. By a similar argument, one can establish the bounded isomorphism $I_p(\Omega^-) \simeq B_p^\psi(\Gamma)$. 

To establish the equivalence between $B_p^\phi(\Gamma)$ and $B_p^\psi(\Gamma)$, we rely on the following result, which indicates that  $B_p(\R)$ can be used to characterize the quasi-symmetry of a homeomorphism.
\begin{lemma}\cite{Bourdaud00, NS95}\label{lem:pullback}
     Let $h$ be an oriented homeomorphism on $\R$. Then the pull-back operator $P_h$ defined by $P_hu=u\circ h$ is a bounded operator on $B_p(\R)$ $ (1<p<\infty) $ if and only if $h$ is quasisymmetric.
\end{lemma}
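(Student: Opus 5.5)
The plan is to prove the two implications separately, working throughout with the scale-invariant quantity $\|u\|_{B_p(\R)}^p = \int_{\R}\int_{\R} |u(x)-u(y)|^p |x-y|^{-2}\,dx\,dy$. This quantity is unchanged under affine maps of $\R$ and under addition of constants, and both facts will be used.

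\textbf{Sufficiency.} Suppose $h$ is quasisymmetric with constant $C$. Let $\rho$ be the Beurling--Ahlfors extension of $h$, which is a $K$-quasiconformal self-map of $\H$ and is also bi-Lipschitz in the hyperbolic metric. Let $F = Pu$ be the Poisson extension of $u$, so that $I_p(F,\H)\asymp \|u\|^p_{B_p}$. I will not compose with $\rho$ directly, since $F\circ\rho$ is not harmonic. Instead I will use a dyadic (Whitney/Carleson box) characterization: up to constants, $\|u\|^p_{B_p}$ equals
\[
\sum_{I} |I|^{-1}\int_I |u-u_I|^p\,dx \quad\text{or}\quad \sum_{I} \mathrm{osc}_{T(I)}(F)^p,
\]
where $I$ runs over dyadic intervals and $T(I)$ is the top half of the Carleson box over $I$. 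The second form comes from the mean-value property, $|\nabla F|\lesssim y^{-1}\mathrm{osc}$ on a slightly larger Whitney box, together with the fact that $y^{p-2}\,dm$ gives each $T(I)$ mass $|I|^{p}$.

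Quasisymmetry means that $h$ maps each interval $I$ to an interval $h(I)$. The preimages of dyadic intervals under $h$ then form a family with bounded overlap and comparable adjacent lengths, and there is a one-to-one correspondence of Whitney boxes with bounded multiplicity. To compare oscillations I will use the quasi-hyperbolic bi-Lipschitz property of $\rho$ together with Harnack-type estimates for harmonic functions. A cleaner route is to bypass $F$ entirely and prove
\[
\int\!\!\int \frac{|u(h(x))-u(h(y))|^p}{|x-y|^2}\,dx\,dy \lesssim \int\!\!\int \frac{|u(s)-u(t)|^p}{|s-t|^2}\,ds\,dt
\]
directly, by decomposing the left side over dyadic annuli $|x-y|\sim 2^{-k}$. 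One then changes variables $s=h(x)$, $t=h(y)$ using the $A_\infty$ property of $h'$. The difficulty is that $h$ need not be absolutely continuous, so the change of variables must be done at the level of averages: replace $u\circ h$ on $I$ by its average against Lebesgue measure and compare it with $u$ averaged on $h(I)$. I expect this to be the main obstacle. The intended fix is to work with the Poisson-extension oscillation form, where only the geometry of boxes and the distortion of $\rho$ matter and no density of $h$ appears. Since $h^{-1}$ is also quasisymmetric, the same argument gives boundedness of $P_{h^{-1}}$, so $P_h$ is in fact an isomorphism.

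\textbf{Necessity.} Suppose $\|u\circ h\|_{B_p}\le M\|u\|_{B_p}$ for all $u$. I will test this on piecewise-linear bumps. Fix adjacent intervals $I_1=[a-t,a]$ and $I_2=[a,a+t]$, and put $J=h(I_1\cup I_2)$, $J_i=h(I_i)$. Let $u$ equal $1$ on $J_1$, decay linearly to $0$ over length $|J_1|$ on the left, and take the same shape on the right. Scaling invariance gives $\|u\|_{B_p}\asymp 1$, independent of $|J_1|$, and the same holds for the two-sided bump. Testing instead with $u$ equal to $1$ on $J_1$ and $0$ beyond $J_2$, with a linear ramp across $J_2$, one computes $\|u\|_{B_p}^p\gtrsim \log(\cdot)$-type quantities only through the ratio $|J_1|/|J_2|$. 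Concretely, a ramp of length $\epsilon$ adjacent to a plateau of length $L$ has energy $\asymp 1+\log(L/\epsilon)$ when $p$ is fixed.

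If $|J_2|/|J_1|$ is very small, take the ramp on $J_2$ only. Then $u$ has energy $\asymp \log(|J_1|/|J_2|)$, while $u\circ h$ jumps across $I_2$, which has length equal to that of $I_1$, so $u\circ h$ has energy $O(1)$. This comparison points the wrong way, so I will reverse roles: choose a ramp whose preimage is short, and use boundedness of $P_h$, whose inverse direction is obtained from $h^{-1}$ via the Open Mapping theorem if needed. Matching the energies forces $\log(|J_1|/|J_2|)$ to be bounded, and this gives quasisymmetry.
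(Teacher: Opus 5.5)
The paper only quotes this lemma from the literature, so your sketch has to stand on its own. As written it does not, in either direction.

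\textbf{Sufficiency.} You never reach an actual argument.
\begin{itemize}
\item You discard $(Pu)\circ\rho$ because it is not harmonic.
\item The dyadic change of variables breaks down for singular $h$, as you note.
\item The ``oscillation form'' cannot rescue it. $\operatorname{osc}_{T(I)}P(u\circ h)$ has no relation to oscillations of $F=Pu$ on $\rho(T(I))$, precisely because $P(u\circ h)\neq F\circ\rho$.
\item Separately, with a single dyadic grid, $\sum_I|I|^{-1}\int_I|u-u_I|^p$ is not comparable to $\|u\|_{B_p}^p$: it vanishes for $u=\chi_{[0,\infty)}$.
\end{itemize}
The missing observation is that harmonicity plays no role in the upper trace estimate. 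For every $G\in C^1(\H)\cap C(\overline{\H})$ with $g=G|_{\R}$,
\[
\int_{\R}\int_{\R}\frac{|g(x)-g(y)|^p}{|x-y|^2}\,dx\,dy\lesssim\int_{\H}|\nabla G(z)|^p y^{p-2}\,dm(z).
\]
This follows by splitting $g(x)-g(y)$ through $x+it$ and $y+it$ with $t=|x-y|$, and applying Hardy's inequality, which is admissible because $p-2<p-1$. For the Beurling--Ahlfors extension one has $|D\rho(z)|\asymp J_\rho(z)^{1/2}\asymp\operatorname{Im}\rho(z)/\operatorname{Im}z$. Changing variables then gives $I_p(F\circ\rho,\H)\lesssim I_p(F,\H)$ for every $p$; when $p<2$ it is the lower bound on $|D\rho|$ that is needed. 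Hence
\[
\|u\circ h\|_{B_p}^p\lesssim I_p(F\circ\rho,\H)\lesssim I_p(F,\H)\asymp\|u\|_{B_p}^p
\]
for continuous compactly supported $u$, and $P_h$ extends by density. For singular $h$, this is in any case how $u\circ h$ must be interpreted.

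\textbf{Necessity.} As you notice, a bump adapted to $h(I_1),h(I_2)$ gives large $\|u\|$ and bounded $\|u\circ h\|$, which is compatible with boundedness of $P_h$. The Open Mapping theorem cannot reverse this. It requires $P_h$ to be onto $B_p$, i.e.\ that $P_{h^{-1}}$ already maps $B_p$ into itself, which is what you would be assuming. Placing a linear ramp on the long interval instead does not help either, since a linear ramp has energy bounded below independently of its length.

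The fix is to put the bump in the target. Take adjacent intervals $K_1,K_2,K_3$ of equal length, and let $u$ rise linearly on $K_1$, equal $1$ on $K_2$, fall on $K_3$, and vanish elsewhere. By affine invariance $\|u\|_{B_p}^p=c_p$. With $x_0$ the left endpoint of $h^{-1}(K_1)$,
\[
M^pc_p\ge\|u\circ h\|_{B_p}^p\ge\int_{h^{-1}(K_2)}\int_{-\infty}^{x_0}\frac{dy\,dx}{(x-y)^2}\ge\log\frac{|h^{-1}(K_2)|}{|h^{-1}(K_1)|}.
\]
Together with the reflected estimate, this makes $h^{-1}$, and hence $h$, quasisymmetric.

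Alternatively, keep equal adjacent $I_1,I_2,I_3$ but use a logarithmic profile: $u=1$ on $h(I_1)$ and $u=0$ at distance $\ge|h(I_2)|$ from $h(I_1)$. Then $\|u\|_{B_p}^p\lesssim(\log(|h(I_2)|/|h(I_1)|))^{1-p}$. On the other side, $\|u\circ h\|_{B_p}^p\ge\int_{I_1}\int_{I_3}|x-y|^{-2}\,dx\,dy=\log\frac43$. Comparing the two bounds the ratio $|h(I_2)|/|h(I_1)|$, and this is where $p>1$ is used.
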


\begin{lemma}\label{lem: quasicircle}
   Suppose $\Gamma$ is a locally rectifiable Jordan curve. Then the identity map induces a bounded isomorphism between $B_p^\phi(\Gamma)$ and $B_p^\psi(\Gamma)$ if and only if $\Gamma$ is a quasicircle. Consequently, $I_p(\Omega^+) \simeq I_p(\Omega^-)$ if and only if $\Gamma$ is a quasicircle.
\end{lemma}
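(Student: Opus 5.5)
The plan is to transfer everything to the real line through the Riemann maps and then apply Lemma~\ref{lem:pullback} to the conformal sewing $h_\Gamma=\psi^{-1}\circ\phi$. By definition, $f\in B_p^\phi(\Gamma)$ if and only if $f\circ\phi\in B_p(\R)$, and $f\in B_p^\psi(\Gamma)$ if and only if $f\circ\psi\in B_p(\R)$. Both $\phi$ and $\psi$ are locally absolutely continuous on $\R$ (F. and M. Riesz), so they preserve null sets in both directions (in the direction $\R\to\Gamma$ this is stated above; in the direction $\Gamma\to\R$ it follows because $|\phi'|>0$ a.e.). Hence a function defined a.e.\ on $\Gamma$ corresponds to a function defined a.e.\ on $\R$ through either parametrization.

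For $u:=f\circ\psi$ we have
\[
f\circ\phi=u\circ(\psi^{-1}\circ\phi)=P_{h_\Gamma}u .
\]
Therefore the identity map $B_p^\psi(\Gamma)\to B_p^\phi(\Gamma)$ is bounded exactly when $P_{h_\Gamma}$ is bounded on $B_p(\R)$, with comparable norms. Likewise, the identity $B_p^\phi(\Gamma)\to B_p^\psi(\Gamma)$ is bounded exactly when $P_{h_\Gamma^{-1}}$ is bounded on $B_p(\R)$.

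Next apply Lemma~\ref{lem:pullback}. If $\Gamma$ is a quasicircle, then $h_\Gamma$ is quasisymmetric (Ahlfors). The inverse of a quasisymmetric map is quasisymmetric, for instance via the Beurling--Ahlfors characterization through quasiconformal self-maps of $\H$, whose inverses are quasiconformal. So both $P_{h_\Gamma}$ and $P_{h_\Gamma^{-1}}$ are bounded, and the identity is a bounded isomorphism. Conversely, if the identity is a bounded isomorphism, then in particular $P_{h_\Gamma}$ is bounded on $B_p(\R)$. Lemma~\ref{lem:pullback} then gives that $h_\Gamma$ is quasisymmetric, hence $\Gamma$ is a quasicircle.

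The main point needing care is making "the identity map" meaningful, i.e.\ checking that $B_p^\phi(\Gamma)$ and $B_p^\psi(\Gamma)$ are spaces of a.e.-defined functions on $\Gamma$ with respect to the same measure class. This is exactly the mutual absolute continuity of $\phi$ and $\psi$ with arc length on the rectifiable curve $\Gamma$ noted above. One also has to account for the fact that $B_p$ seminorms ignore constants; this is harmless, since pullback preserves constants.

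For the consequence, combine with the isomorphisms $P_+:B_p^\phi(\Gamma)\to I_p(\Omega^+)$ and $P_-:B_p^\psi(\Gamma)\to I_p(\Omega^-)$ established above. The natural map $I_p(\Omega^+)\to I_p(\Omega^-)$, $F\mapsto P_-(F|_\Gamma)$, equals $P_-\circ\mathrm{id}\circ P_+^{-1}$. It is a bounded isomorphism if and only if the identity $B_p^\phi(\Gamma)\to B_p^\psi(\Gamma)$ is one, which holds if and only if $\Gamma$ is a quasicircle.
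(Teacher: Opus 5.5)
Your proposal is correct and follows the paper's own argument. You transfer to $\R$ via $\phi$ and $\psi$, write $f\circ\phi=P_{h_\Gamma}(f\circ\psi)$ for the conformal sewing $h_\Gamma=\psi^{-1}\circ\phi$, apply Lemma~\ref{lem:pullback} in both directions, and pass to $I_p(\Omega^\pm)$ through the isomorphisms $P_\pm$. The points you add (null-set preservation so the identity map makes sense, quasisymmetry of $h_\Gamma^{-1}$ for the inverse, and the ``only if'' half of the consequence) are exactly what the paper leaves implicit.
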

\begin{proof}
  It is worth noting that for $p=2$, this result was first proved in \cite[ Theorem 2.14]{Schippers2017}. We now proceed to prove the general case within our descriptive framework.   
  
  Suppose now  $B_p^\phi(\Gamma)\simeq B_p^\psi(\Gamma)$. For any $f \in B_p^\phi(\Gamma)$, it follows that $f\circ \psi \in B_p(\R)$ and $ \|f \circ \psi \|_{B_p} \asymp \|f \circ \phi\|_{B_p}$. Note that
    \begin{align*}
         f \circ \phi=f\circ \psi \circ \psi^{-1} \circ \phi=P_{h_\Gamma}(f\circ \psi),
    \end{align*}
    where the pull-back operator $P_h$ is defined by $P_hu:=u \circ h$ for an orientation-preserving homeomorphism $h: \R \to \R$. Thus, by Lemma~\ref{lem:pullback} we know $h_\Gamma$ is quasisymmetric and hence $\Gamma$ is a quasicircle.
    
    Conversely, assume $\Gamma$ is a quasicircle. By Lemma~\ref{lem:pullback} again, $P_{h_\Gamma}$ is an isomorphism on $B_p(\R)$. This yields  $B_p^{\phi}(\Gamma) \simeq B_p^{\psi}(\Gamma)$, from which we conclude $I_p(\Omega^+) \simeq I_p(\Omega^-)$.  In fact, the bounded isomorphism between $I_p(\Omega^+)$ and $I_p(\Omega^-)$ is determined by $$F \xrightarrow{\circ \phi} \widetilde{F} \xrightarrow{\text{Trace}} \widetilde{f} \xrightarrow{\circ h_\Gamma^{-1}} \widetilde{g} \xrightarrow{\text{Poisson}} \widetilde{G} \xrightarrow{\circ \psi^{-1}} G.$$
\end{proof}

\section{Besov spaces and chord-arc domains}

We begin by stating a geometric estimate which plays a crucial role in settling the case $1<p<2$.

\begin{lemma}\label{lem:tubular}
	Let $\Omega$ be a bounded Jordan domain with rectifiable boundary $\Gamma$, and $L=\ell(\Gamma)$ denotes the length and $R=\operatorname{diam}(\Gamma)$ the diameter of $\Gamma$.
	If $1<p<\infty$, then
	\begin{equation}\label{eq:tubular-weight}
		\int_\Omega\delta(z)^{p-2}\,dm(z)
		\le C_p \ell(\Gamma) R^{p-1}.
	\end{equation}
\end{lemma}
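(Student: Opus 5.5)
We propose to prove \eqref{eq:tubular-weight} by the layer-cake (coarea) formula applied to the distance function $\delta$, which reduces everything to an estimate for the area of the tubular neighbourhoods
\[
A(t):=m\bigl(\{z\in\Omega:\ \delta(z)<t\}\bigr),\qquad t>0 .
\]
Since $z\in\Omega$ lies at distance at most $R$ from $\Gamma$ (indeed $\Omega$ is contained in the convex hull of $\Gamma$, so $\delta\le R$ on $\Omega$), only the range $0<t\le R$ matters. The key geometric input is the bound
\begin{equation*}
A(t)\le m\bigl(\{z\in\mathbb{C}:\operatorname{dist}(z,\Gamma)<t\}\bigr)\le C\,t\,\ell(\Gamma)+C t^2\lesssim t\,\ell(\Gamma),\qquad 0<t\le R,
\end{equation*}
where the last step uses $t\le R\le \ell(\Gamma)$. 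To prove the covering estimate, we cut $\Gamma$ into $N=\lceil \ell(\Gamma)/t\rceil$ consecutive arcs of length at most $t$. Each arc lies in a disk of radius $t$ about any of its points, so the $t$-neighbourhood of $\Gamma$ is covered by $N$ disks of radius $2t$. Its area is therefore at most $4\pi t^2 N\le 4\pi t(\ell(\Gamma)+t)$.

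Next we split according to the sign of $p-2$. For $p\ge 2$ the weight is bounded by $R^{p-2}$, and $m(\Omega)\le A(R)\lesssim R\,\ell(\Gamma)$, which immediately gives $\int_\Omega\delta^{p-2}\,dm\lesssim \ell(\Gamma)R^{p-1}$.

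For $1<p<2$ the weight is singular near $\Gamma$, and the layer-cake representation does the work. We write
\[
\int_\Omega \delta^{p-2}\,dm=\int_0^\infty m\bigl(\{z\in\Omega:\delta(z)^{p-2}>s\}\bigr)\,ds .
\]
Since $\delta\le R$, the integrand equals $m(\Omega)$ for $s\le R^{p-2}$, and equals $A(s^{1/(p-2)})$ for larger $s$. Substituting $t=s^{1/(p-2)}$ gives
\[
\int_\Omega \delta^{p-2}\,dm=R^{p-2}m(\Omega)+(2-p)\int_0^R A(t)\,t^{p-3}\,dt .
\]
Inserting $A(t)\lesssim t\,\ell(\Gamma)$ bounds the integral by $\ell(\Gamma)\int_0^R t^{p-2}\,dt=\ell(\Gamma)R^{p-1}/(p-1)$. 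This is finite precisely because $p>1$, which explains why the constant $C_p$ blows up as $p\to1$. A cleaner alternative is a dyadic decomposition into layers $\{2^{-k-1}R\le\delta<2^{-k}R\}$, each of area $\lesssim 2^{-k}R\,\ell(\Gamma)$, followed by summing the geometric series $\sum_k 2^{-k(p-1)}$.

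The main point needing care is the tubular area bound $A(t)\lesssim t\,\ell(\Gamma)$. It must hold for an arbitrary rectifiable curve, with no Ahlfors regularity assumed, and the covering-by-arcs argument above handles this. The remaining steps are routine integrations.
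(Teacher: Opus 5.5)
Your proof is correct and follows essentially the same route as the paper: the same covering of the $t$-neighbourhood of $\Gamma$ by $\lceil \ell(\Gamma)/t\rceil$ disks of radius $2t$ (area $\lesssim t\,\ell(\Gamma)+t^2$), the same use of $R\le \ell(\Gamma)$ to absorb the $t^2$ term, and a summation over the level sets of $\delta$. Your layer-cake integral is the continuous version of the paper's dyadic decomposition into layers $\{2^{-j-1}R<\delta\le 2^{-j}R\}$, which you also mention; the only nit is that $m(\Omega)\le A(R)$ needs $\delta<R$ strictly, and this holds since in fact $\delta\le R/2$ on $\Omega$.
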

\begin{proof}
	We first estimate the Euclidean area of the  tubular neighborhoods of $\Gamma$. For $t>0$, set
	\[
	N_t(\Gamma)=\{w\in\mathbb C:\operatorname{dist}(w,\Gamma)<t\}=\bigcup_{w\in\Gamma}B(w,t). \] Geometrically, $N_t(\Gamma)$ is the region swept out by a disk of radius $t$ whose center moves along the curve $\Gamma$.
	Let $\gamma:[0,L]\to\Gamma$ be an arc-length parametrization, where
	\[
	\gamma(0)=\gamma(L),\qquad |\gamma'(s)|=1
	\quad\text{for a.e. }s\in[0,L].
	\] Put
	\[N=\lceil L/t\rceil,\qquad \ell_0=L/N\le t, \]
	and choose the points
	\[
	z_j=\gamma(j\ell_0),\qquad j=0,\ldots,N-1.
	\]
	For every $\zeta=\gamma(s)\in\Gamma$, we can always find a point \(z_j\) such that \(|s-j\ell_0|<\ell_0/2\). Since Euclidean distance is no larger than arc-length distance,
	\[
	|\zeta-z_j|\le \ell_0/2\le t/2.
	\]
	Now let $w\in N_t(\Gamma)$. Choose $\zeta\in\Gamma$ with $|w-\zeta|<t$. For the point $z_j$ selected above,
	\[
	|w-z_j|\le |w-\zeta|+|\zeta-z_j|<t+t/2<2t.
	\]
	Consequently,
	\[
	N_t(\Gamma)\subset\bigcup_{j=0}^{N-1}B(z_j,2t).
	\]
	The disks may overlap, but we have 
	\begin{equation}\label{eq:tubular-area}
		\begin{aligned}
			|N_t(\Gamma)|
			&\le N\pi(2t)^2
			\le 4\pi t^2\left(\frac Lt+1\right)\\
			&\le C(Lt+t^2).
		\end{aligned}
	\end{equation}
	We next decompose $\Omega$  according to distance from the boundary. First observe that
	\begin{equation}\label{eq:inradius-diameter}
		\delta(w)\le R/2,\qquad w\in \Omega.
	\end{equation}	
	For $j\ge0$, define
	\[
	A_j=\left\{w\in \Omega:
	2^{-j-1}R<\delta(w)\le2^{-j}R
	\right\}.
	\]
 Then,
	\[
	A_j\subset N_{2^{-j}R}(\Gamma).
	\]
Therefore
	\[
	\begin{aligned}
		\int_{A_j}\delta(w)^{p-2}\,dm(w)
		&\asymp (2^{-j}R)^{p-2}|A_j|\\
		&\le (2^{-j}R)^{p-2}
		|N_{2^{-j}R}(\Gamma)|.
	\end{aligned}
	\]
	Applying \eqref{eq:tubular-area} with $t=2^{-j}R$ yields
	\[
	\begin{aligned}
		\int_\Omega\delta^{p-2}(w)dm(w)
		&\le C\sum_{j=1}^\infty
		(2^{-j}R)^{p-2}
		\left(L2^{-j}R+2^{-2j}R^2\right)\\
		&\le C_p LR^{p-1}
		\sum_{j=1}^\infty2^{-j(p-1)}
		+C_pR^p\sum_{j=1}^\infty2^{-jp}\\
		&\le C_p\left(LR^{p-1}+R^p\right).
	\end{aligned}
	\]
	Finally, choose $a,b\in\Gamma$ such that $|a-b|=R$. The two boundary subarcs joining $a$ to $b$ each have length at least $R$, so $L\ge2R$. Thus $R^p\le \frac12 LR^{p-1}$, and \eqref{eq:tubular-weight} follows.
\end{proof}

We also need to introduce a  characterization of Ahlfors-David regular curves, which has been applied to the case $p=2$ by Wei and Zinsmeister in \cite[Section 5.2]{WZ25chordarc}. This result is originally due to Yves Meyer, whose proof first appeared in David's  \cite[Proposition 1]{David84} studying the $L^2$ boundedness of the  Cauchy integral on the regular curves . Later, Bruna and Gonz\'alez provided an alternative proof in \cite[Theorem 3]{L2onCA99}. 
\begin{lemma}[\cite{Meyer}] \label{Lem:regular}
    Let $\Gamma$ be a rectifiable Jordan curve  w.r.t. spherical metric passing through $\infty$. Then $\Gamma$ is an Ahlfors-David regular curve iff there exists a constant $C > 0$ such that for any $w \notin \Gamma$,
        \begin{equation}\label{Eq: Meyer-David-Integral}
            \int_{\Gamma} \frac{|dz|}{|z - w|^2} \le \frac{C}{\delta(w)}.
        \end{equation}
\end{lemma}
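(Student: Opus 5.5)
The statement is an ``if and only if'', and I will prove the two directions separately. Write $\mu=\ell|_\Gamma$ for the arc-length measure on $\Gamma$, put $d=\delta(w)$, and let $w_0\in\Gamma$ be a point with $|w-w_0|=d$.

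For the forward direction, assume $\Gamma$ is Ahlfors--David regular with constant $C_0$, i.e. $\mu(B(z,r))\le C_0 r$ for every $z$ and $r$. Every $z\in\Gamma$ satisfies $|z-w|\ge d$. I decompose $\Gamma$ into dyadic shells around $w$:
\[
S_k=\{z\in\Gamma:\ 2^k d\le |z-w|<2^{k+1}d\},\qquad k\ge 0 .
\]
These shells cover $\Gamma\setminus\{\infty\}$. Since $S_k\subset B(w,2^{k+1}d)$, regularity gives $\mu(S_k)\le C_0 2^{k+1}d$. Hence
\[
\int_\Gamma\frac{|dz|}{|z-w|^2}\le\sum_{k\ge0}\frac{C_0 2^{k+1}d}{4^k d^2}=\frac{4C_0}{d},
\]
which is \eqref{Eq: Meyer-David-Integral} with $C=4C_0$. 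The centre $w$ need not lie on $\Gamma$ here, since regularity is assumed for balls centred at any point of $\mathbb{C}$.

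For the converse, assume \eqref{Eq: Meyer-David-Integral} holds, and fix $z_0\in\mathbb{C}$ and $r>0$. The goal is $\mu(B(z_0,r))\lesssim r$.

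First I reduce to balls centred on $\Gamma$. If $B(z_0,r)$ meets $\Gamma$, it is contained in a ball of radius $2r$ centred at a point of $\Gamma$. If it does not meet $\Gamma$, there is nothing to prove. So it suffices to take $z_0\in\Gamma$.

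Next I need a point off $\Gamma$ whose distance to $\Gamma$ is comparable to $r$ and which lies close to $z_0$. This is the main obstacle: an arbitrary point near $z_0$ could have $\delta$ much smaller than $r$. To handle it, I use that $\Gamma$ is a Jordan curve through $\infty$, so each complementary domain is unbounded.

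Consider the disk $B(z_0,2r)$ and suppose first that it is not contained in $N_{r/8}(\Gamma)$. Then some $w\in B(z_0,2r)$ has $\delta(w)\ge r/8$. Every $z\in\Gamma\cap B(z_0,r)$ satisfies $|z-w|\le 3r$. Therefore
\[
\frac{\mu(B(z_0,r))}{9r^2}\le\int_\Gamma\frac{|dz|}{|z-w|^2}\le\frac{C}{\delta(w)}\le\frac{8C}{r},
\]
so $\mu(B(z_0,r))\le 72Cr$.

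It remains to rule out the other case: that every point of $B(z_0,2r)$ lies within $r/8$ of $\Gamma$. I would first try to use the hypothesis at many points, since a short-range pigeonhole argument alone is not enough.

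Choose a point $w\in B(z_0,r/16)\setminus\Gamma$, and set $d=\delta(w)\le r/16$. Apply \eqref{Eq: Meyer-David-Integral} at $w$, restricting the integral to the shell $\{z\in\Gamma:|z-w|<2t\}$ for $t\ge d$. This gives
\[
\mu(B(w,2t))\le 4Ct^2\cdot\frac{C}{d}.
\]
Taking $t\approx d$ shows $\mu(B(w,2d))\lesssim d$, so small balls around points near $\Gamma$ have mass linear in their radius.

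Now pack $B(z_0,r)$ with about $(r/d)^2$ disjoint disks of radius about $d$ centred at points off $\Gamma$. By the covering assumption, each such centre can be placed so that its distance to $\Gamma$ is comparable to its distance to the nearest neighbouring centre. Summing the linear bounds over a Whitney-type decomposition of $B(z_0,2r)\setminus\Gamma$ then controls $\mu(B(z_0,r))$ by a sum of local bounds.

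I expect this summation to be the hardest step. A naive sum gives $\mu(B(z_0,r))\lesssim r^2/d$ rather than $r$. If the Whitney summation cannot be closed, I would instead defer to the known proofs, as the paper does: David's \cite{David84} and Bruna--Gonz\'alez \cite{L2onCA99}. Their argument exploits the fact that the estimate holds uniformly for all $w$. In particular, it uses points on both sides of $\Gamma$ at distance comparable to $r$, which exist because $\Gamma$ is a Jordan curve whose diameter exceeds $r$.
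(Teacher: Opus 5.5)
\textbf{Assessment.} Your forward direction is correct and complete. Your first case of the converse, where some $w\in B(z_0,2r)$ has $\delta(w)\ge r/8$, is also fine. For comparison, the paper gives no proof of this lemma and simply cites Meyer, David and Bruna--Gonz\'alez.

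\textbf{The gap.} The remaining case, where $\Gamma$ is $r/8$-dense in $B(z_0,2r)$, is the entire content of the converse, and your proposal does not close it. The packing/Whitney idea cannot work as set up. It only uses single-scale consequences of the hypothesis, such as $\mu(B(w,t))\le 4t^2 C/\delta(w)$ (your display has a spurious extra factor $C$). These bounds are perfectly consistent with $\Gamma$ being $d$-dense in a disk of radius $r$ with $d\ll r$. For example, take a curve zigzagging through the disk along parallel segments at mutual distance $d$. It has length $\asymp r^2/d$ in the disk and satisfies $\mu(B(w,t))\lesssim t^2/\delta(w)$ for every $w$ and $t$. This is exactly why your sum returns $r^2/d$. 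For the same reason, your closing claim is false: you say that points at distance $\asymp r$ from $\Gamma$ near $z_0$ exist ``because $\Gamma$ is a Jordan curve whose diameter exceeds $r$'', but the zigzag curve has no such points. Their existence must be derived from \eqref{Eq: Meyer-David-Integral}.

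\textbf{The missing idea.} The hypothesis bounds the whole integral, and in the dense configuration that integral picks up a factor $\log(r/d)$ from the scales between $d$ and $r$.

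\emph{Setup.} Let $d=\sup\{\delta(x):x\in B(z_0,r)\}$, which is positive since $\Gamma$ has zero area. Pick $w\in B(z_0,r)$ with $\delta(w)\ge d/2$. Your Case 1 computation gives $\mu(B(z_0,r))\le 8Cr^2/d$, so it suffices to show $d\gtrsim_C r$.

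\emph{Length in one annulus.} Take a dyadic $s$ with $16d\le s\le r/4$. The annulus $\{s\le|z-w|<2s\}$ contains a disk $D_s\subset B(z_0,r)$ of radius $s/4$. Take a maximal $3d$-separated set $\{x_i\}$ in the concentric disk of radius $s/8$; it has $\gtrsim s^2/d^2$ points. Each $x_i$ lies within $d$ of some $y_i\in\Gamma$. The disks $B(y_i,d/2)$ are pairwise disjoint and contained in $D_s$. Each carries length at least $d/2$, because $\Gamma$ is connected, unbounded and passes through $y_i$. Hence $\mu(\Gamma\cap D_s)\gtrsim s^2/d$, and this annulus contributes $\gtrsim 1/d$ to $\int_\Gamma|dz|/|z-w|^2$.

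\emph{Summing the scales.} There are $\asymp\log(r/d)$ admissible scales, so
$\log(r/d)/d\lesssim \int_\Gamma|dz|/|z-w|^2\le C/\delta(w)\le 2C/d$. This forces $r/d\le e^{O(1+C)}$, and therefore $\mu(B(z_0,r))\lesssim C e^{O(1+C)}r$.
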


\begin{thm}\label{3.4}
     Let $\Gamma $ be a rectifiable Jordan curve w.r.t. spherical metric passing through $\infty$, bounding domains $\Omega^+$ and $\Omega^-$. 
 For any $p \in (1, \infty)$, $\Gamma$ is a chord-arc curve if and only if 
\begin{equation}\label{Eq: CAcondition}
B_p(\Gamma) \simeq I_p(\Omega^+) \simeq I_p( \Omega^-).
\end{equation}
\end{thm}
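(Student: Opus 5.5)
We prove the two directions separately. In both, Lemma \ref{lem: quasicircle} converts the quasicircle part into an equivalence between $B_p^\phi(\Gamma)$ and $B_p^\psi(\Gamma)$. The remaining task is to compare $B_p(\Gamma)$ with $B_p^\phi(\Gamma)\simeq I_p(\Omega^+)$.

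\emph{Sufficiency of chord-arc.} Assume $\Gamma$ is chord-arc. Then $\Gamma$ is a quasicircle, so Lemma \ref{lem: quasicircle} gives $I_p(\Omega^+)\simeq I_p(\Omega^-)$, and it remains to prove $\|f\|_{B_p(\Gamma)}\asymp\|f\circ\phi\|_{B_p(\R)}$.

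For chord-arc curves, Lavrentiev's theorem (in the Jerison--Kenig form) gives two facts:
\begin{itemize}
\item $|\phi'|\in A_\infty(\R)$;
\item the arc-length parametrization $z:\R\to\Gamma$ is bi-Lipschitz, so $B_p(\Gamma)\simeq B_p(\R)$ via $f\mapsto f\circ z$.
\end{itemize}
Then $f\circ\phi=(f\circ z)\circ h$ with $h=z^{-1}\circ\phi$, where $h(x)=\int_0^x|\phi'|$. Since $|\phi'|$ is a doubling $A_\infty$ weight, $h$ is quasisymmetric. Lemma \ref{lem:pullback} then gives $\|(f\circ z)\circ h\|_{B_p}\asymp\|f\circ z\|_{B_p}$. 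Composing these equivalences proves \eqref{Eq: CAcondition}.

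\emph{Necessity of chord-arc.} Assume \eqref{Eq: CAcondition}. Lemma \ref{lem: quasicircle} shows $\Gamma$ is a quasicircle. Since a Jordan curve is chord-arc iff it is an Ahlfors regular quasicircle, it suffices to show $\Gamma$ is Ahlfors--David regular. By Lemma \ref{Lem:regular}, this reduces to proving \eqref{Eq: Meyer-David-Integral} for each $w\notin\Gamma$, say $w\in\Omega^-$ with $\delta=\delta(w)$.

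For the test function I take $f(z)=\frac{\delta}{z-w}$ on $\Gamma$. Its holomorphic extension $F$ to $\Omega^+$ satisfies $|\nabla F|\asymp\delta|z-w|^{-2}$.

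\emph{Upper bound on the trace.} By assumption $\|f\|^p_{B_p(\Gamma)}\lesssim I_p(F,\Omega^+)$. It remains to show $I_p(F,\Omega^+)\le C_p$ with $C_p$ independent of $w$. Since $|\nabla F|\lesssim\delta|z-w|^{-2}\lesssim\delta^{-1}$, we can decompose $\Omega^+$ into dyadic annuli $\{2^k\delta\le|z-w|<2^{k+1}\delta\}$ and bound $\int\delta^p|z-w|^{-2p}\delta(z)^{p-2}\,dm$ over each annulus.

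When $p\ge2$, use $\delta(z)\le|z-w|$ and sum the series $\sum 2^{-kp}\cdot 2^{k(p-2)}\cdot 2^{2k}$. This series does not converge, so the estimate has to be sharpened.

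When $1<p<2$, the negative power $\delta(z)^{p-2}$ is the real difficulty. Here Lemma \ref{lem:tubular} is applied to the bounded pieces of $\Omega^+$ inside each disk $B(w,2^{k+1}\delta)$. It gives a bound $\ell(\Gamma\cap B)\,(2^k\delta)^{p-1}$, which reintroduces arc-length. I would resolve this with a bootstrap. Set $M(r)=\sup_{z,r'\le r}\ell(\Gamma\cap B(z,r'))/r'$ for a truncated or smoothed curve. The estimate yields an inequality of the form $M\le C+\varepsilon M$, which closes.

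\emph{Lower bound on the trace.} Separately, I need a lower bound of $\|f\|^p_{B_p(\Gamma)}$ by $\delta\int_\Gamma|dz|/|z-w|^2$ up to constants. The plan is to pair points $z\in\Gamma$ with points $\zeta$ at distance $\asymp|z-w|$ for which $|f(z)-f(\zeta)|\gtrsim\delta/|z-w|$. The quasicircle (bounded turning) property supplies, within arc-length comparable measure, such points $\zeta$ along the curve.

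\emph{Main obstacle.} Making the necessity argument uniform is the hard step: both the $p$-th power lower bound, which yields a weighted integral rather than \eqref{Eq: Meyer-David-Integral} directly, and the $1<p<2$ upper bound require controlling arc-length a priori. I expect to use a Hölder/maximal-counting argument to convert the $p$-th power estimate into the first-power Meyer--David bound.
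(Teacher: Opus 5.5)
\paragraph{Sufficiency.} Your sufficiency direction is fine. It gives a self-contained proof of the isomorphism $B_p(\Gamma)\simeq\calB_p(\Gamma)$, which the paper only cites from earlier work.

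\paragraph{Necessity, $p\ge2$.} The necessity direction is not yet a proof. For $p\ge2$ you discard the correct argument because of an arithmetic slip. On $\{2^k\delta\le|z-w|<2^{k+1}\delta\}$ the factor $\delta^p|z-w|^{-2p}$ is $\asymp\delta^{-p}2^{-2kp}$, not $2^{-kp}$. So the $k$-th term is $\lesssim 2^{-kp}$, and the crude bound $\delta(z)\le|z-w|$ already gives $I_p(F,\Omega^+)\lesssim 1$. This is the paper's estimate $I_p(F_w,\Omega^+)\lesssim\delta(w)^{-p}$ for the unscaled $F_w$.

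Your lower-bound pairing at comparable distances only yields $\delta^p\int_\Gamma|dz|/|z-w|^{p+1}$, which is weaker than \eqref{Eq: Meyer-David-Integral}. The right pairing is far-with-near. For $|\zeta-w|\ge4\delta$ and $\eta\in\Gamma\cap B(w,2\delta)$ one has $|f(\zeta)-f(\eta)|\asymp1$ and $|\zeta-\eta|\asymp|\zeta-w|$. Also $\ell(\Gamma\cap B(w,2\delta))\ge\delta$, because $\Gamma$ joins $w^*$ to $\infty$. Together these give $\|f\|^p_{B_p(\Gamma)}\gtrsim\delta\int_{|\zeta-w|\ge4\delta}|d\zeta|/|\zeta-w|^2$ directly. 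Pairing $\Gamma\cap B(w,4\delta)$ with $\Gamma\cap\{5\delta\le|z-w|\le6\delta\}$ controls the rest. No bounded-turning, H\"older or maximal argument is needed.

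\paragraph{Necessity, $1<p<2$: the real gap.} Here you cannot prove $I_p(F,\Omega^+)\le C_p$ before knowing regularity. Because of the weight $\delta(z)^{p-2}$, the natural estimate (Lemma \ref{lem:tubular}) bounds the energy only by arc-length. Concretely, after inversion it gives $I_p(\delta F_w,\Omega^+)\lesssim\delta^pL_wR_w^{p-1}\lesssim\delta L_w$, with $L_w=\int_\Gamma|d\zeta|/|\zeta-w|^2$. That is exactly the size of the linear lower bound $\delta\int_\Gamma|dz|/|z-w|^2$ you are aiming for, so comparing the two is vacuous. Your bootstrap $M\le C+\varepsilon M$ has no identified source for $\varepsilon$. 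Moreover, hypothesis \eqref{Eq: CAcondition} is not available for a truncated or smoothed curve.

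The missing idea is that the trace norm is \emph{quadratic} in arc-length while the energy is linear. Set $M_w(z)=1/(z-w)$. Then $F_w\circ M_w^{-1}$ is linear, so conformal invariance together with Lemma \ref{lem:tubular} on the bounded domain $\Omega_w=M_w(\Omega^+)$ gives
\[
I_p(F_w,\Omega^+)\asymp\int_{\Omega_w}\delta^{p-2}\,dm\lesssim L_wR_w^{p-1}.
\]
Meanwhile the boundary norm transforms exactly into $\int_{\Gamma_w}\int_{\Gamma_w}|\xi-\eta|^{p-2}|d\xi||d\eta|$. Since $p<2$ and $|\xi-\eta|\le R_w$, this is at least $R_w^{p-2}L_w^2$. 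As $L_w=\ell(\Gamma_w)<\infty$ by spherical rectifiability, dividing gives $L_w\lesssim R_w\le 2/\delta(w)$. That is precisely \eqref{Eq: Meyer-David-Integral}, obtained with no a priori control and no bootstrap.
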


\begin{proof}
$\Longrightarrow$
Note that when $\Gamma$ is  a quasicircle,   it  follows readily from Lemma~\ref{lem: quasicircle} that
\[B_p^{\phi}(\Gamma) \simeq B_p^{\psi}(\Gamma)\simeq I_p(\Omega^+) \simeq I_p( \Omega^-).\] We denote the space by $\calB_p(\Gamma)$ and  assign a norm   $\|\cdot\|_{\calB_p(\Gamma)}$ to be  $\|\cdot\|_{B_p^{\phi}(\Gamma)}$ or  $\|\cdot\|_{B_p^{\psi}(\Gamma)}$.
Furthermore, the isomorphism $ B_p(\Gamma) \simeq \calB_p(\Gamma)$ in the case of chord-arc curves has been shown in our previous paper \cite[Lemma 5.1]{Jump24}. 

$\Longleftarrow$ Now suppose \eqref{Eq: CAcondition} is valid, we show $\Gamma$ is a chord-arc curve. Note that $I_p(\Omega^+) \asymp I_p(\Omega^-)$ implies $\Gamma$ is a quasicircle by Lemma \ref{lem: quasicircle}. 

Next, we aim to show that $\Gamma$ is Ahlfors-David regular.   Let us briefly review  Wei and Zinsmeister's idea, and then apply their arguments to our case. They used a test function $F_w(z) := (w-z)^{-1}$ to check the regularity condition by Meyer's Lemma \ref{Lem:regular}. This function  was also employed in the work of Bruna and Gonz\'alez \cite{L2onCA99} on Hardy spaces defined on chord-arc domains. Noting that for $p=2$
 \begin{equation*}
        \|F\|^2_{B_2(\Gamma)} = \int_\Gamma \int_\Gamma \frac{1}{|w-\zeta|^2 |w-\eta|^2} |d\zeta| |d\eta| = \left( \int_\Gamma \frac{|d\zeta|}{|\zeta - w|^2} \right)^2.
    \end{equation*}
       Meanwhile, \begin{equation*}
        I_2(F, \Omega^{\pm}) =\int_{\Omega^{\pm}} \frac{1}{|z-w|^4} dm(z) \lesssim \frac{1}{\delta(w)^2}.
    \end{equation*}
Then $\Gamma$ is Ahlfors regular by  Lemma \ref{Lem:regular}.

 We now extend this to $1 < p < \infty$. For $z \in \Omega^+ \cup \Gamma$, we still use $F_w(z) := (w-z)^{-1}$, where $w\in\Omega^{-}$. Noting that  $F$ is analytic on $\Omega^+ \cup \Gamma$.  
By \eqref{Eq: CAcondition},

 \begin{align*}
 	\|F_w|_\Gamma\|_{B_p(\Gamma)}^p&=\int_\Gamma\int_\Gamma
 	\frac{|\zeta-\eta|^{p-2}}
 	{|\zeta-w|^p|\eta-w|^p}
 	|d\zeta|\,|d\eta|\\
 	&\lesssim I_p(F_w,\Omega^+)
 \end{align*}
whenever the right-hand side is finite.
  
 \noindent\textbf{Case 1} ($1<p<2$):
 Let
 \[
 M_w(z)=\frac1{z-w},\qquad \Omega_w=M_w(\Omega^+),\qquad \Gamma_w=M_w(\Gamma),
 \]
 and write
 \[
 L_w=\ell(\Gamma_w),\qquad R_w=\operatorname{diam}(\Gamma_w).
 \]
Since $\Gamma$ is a rectifiable Jordan curve w.r.t. spherical metric, 
 \begin{equation}\label{eq:Lw-meyer}
 	L_w=\int_\Gamma\frac{|d\zeta|}{|\zeta-w|^2}<\infty.
 \end{equation}
 Moreover $|M_w(z)|\le \delta^{-1}(w)$ for $z\in\Gamma$, and therefore
 \begin{equation}\label{eq:Rw-upper}
 \Gamma_{w}\subseteq B(0, \delta^{-1}(w)) \quad \text{and} \quad	R_w\le \frac{2}{\delta(w)}.
 \end{equation}
 The  conformal quasi-invariance \eqref{Eq: transfer} gives
 \begin{equation}\label{eq:energy-inverted}
 	I_p(F_w,\Omega^+)\asymp
 	\int_{\Omega_w}\delta(\xi)^{p-2}\,dm(\xi).
 \end{equation}
Based on \eqref{eq:Lw-meyer}, Lemma~\ref{lem:tubular}  yields
 \begin{equation}\label{eq:energy-LR}
 	I_p^1(F_w,\Omega^+)\lesssim L_wR_w^{p-1}<\infty.
 \end{equation}
 
 The boundary Besov energy has an exact M\"obius form. Indeed,
 \[
 |M_w(\zeta)-M_w(\eta)|
 =\frac{|\zeta-\eta|}{|\zeta-w|\,|\eta-w|},
 \qquad
 |dM_w(\zeta)|=\frac{|d\zeta|}{|\zeta-w|^2}.
 \]
 Changing variables $\xi=M_w(\zeta)$ and $\eta=M_w(\eta)$ therefore gives
 \begin{equation}\label{eq:boundary-inverted}
 	\|F_w|_\Gamma\|_{B_p(\Gamma)}^p
 	=\int_{\Gamma_w}\int_{\Gamma_w}
 	|\xi-\eta|^{p-2}\,|d\xi|\,|d\eta|.
 \end{equation}
 Since $p-2<0$ and $|\xi-\eta|\le R_w$,
 \begin{equation}\label{eq:boundary-LR-lower}
 	\|F_w|_\Gamma\|_{B_p(\Gamma)}^p
 	\ge R_w^{p-2}L_w^2.
 \end{equation}
 Combining  \eqref{eq:Rw-upper}, \eqref{eq:energy-LR} and \eqref{eq:boundary-LR-lower}, gives
 \[
 L_w\lesssim R_w\lesssim\frac{1}{\delta(w)}.
 \]This is precisely Meyer's \eqref{Eq: Meyer-David-Integral} for $w\in\Omega^-$. Notice also that substituting this estimate back into \eqref{eq:energy-LR} gives $I_p(F_w,\Omega^+)\lesssim \delta(w)^{-p}$.

\noindent \textbf{Case 2 ($p \ge 2$):} 
Since $\delta(z)\le|z-w|$ for $z\in\Omega^+$ and $w\in\Omega^-$,
\begin{equation}\label{eq:direct-energy-pge2}
	\begin{aligned}
		I_p(F_w,\Omega^+)
		&=\int_{\Omega^+}\frac{\delta(z)^{p-2}}{|z-w|^{2p}}\,dm(z)\\
		&\le\int_{|z-w|\ge \delta(w)}|z-w|^{-p-2}\,dm(z)\\
		&\lesssim \delta^{-p}(w).
	\end{aligned}
\end{equation}

 For  $w \in \Omega^-$, choose $w^* \in \Gamma$ such that $|w - w^*| = \delta(w)$, and define $\Gamma_{k} := \Gamma \cap B(w, k\delta(w))$ for $k\ge2$ and  $ \Gamma^*_{k}:=\Gamma\setminus\Gamma_{k} $.
Let $\Gamma_{[5,6]} := \Gamma \cap \{z: 5\delta(w) \le |z - w| \le 6\delta(w)\}$, which ensures $\ell(\Gamma_{[5,6]}) \ge 2\delta(w)$. 
For any $\eta \in \Gamma_{4}$ and $\zeta \in \Gamma_{[5,6]}$, we have $|\zeta - \eta| \ge \delta(w)$ while $|w - \eta| \le 4\delta(w)$ and $|w - \zeta| \le 6\delta(w)$. This gives 
\begin{align*}
    \|F\|_{B_p(\Gamma)}^p &> \int_{\Gamma_{[5,6]}} \int_{\Gamma_{4}} \frac{|\zeta - \eta|^{p-2}}{|\zeta - w|^p |\eta - w|^p} |d\eta| |d\zeta|  \\
    &\ge  \int_{\Gamma_{[5,6]}} \int_{\Gamma_{4}} \frac{\delta(w)^{p-2}}{4^p6^{p}\delta(w)^{2p} } |d\eta| |d\zeta|\\
    &\asymp \delta(w)^{-p-2} \ell(\Gamma_{[5,6]}) \ell(\Gamma_{4})\\
    &\gtrsim \delta(w)^{-p-1} \ell(\Gamma_{4}).
\end{align*}
Comparing this with \eqref{eq:direct-energy-pge2} forces $\ell(\Gamma_{4}) \lesssim \delta(w)$, and consequently \begin{equation} \label{near_2}
    \int_{\Gamma_{4}} \frac{|d\zeta|}{|\zeta - w|^2}  \lesssim \delta(w)^{-1}.
\end{equation}

For $\zeta\in \Gamma^*_{4}$ and $\eta\in \Gamma_{2}$, we have
\[ |\zeta - \eta| \ge |\zeta - w| - |\eta - w| \ge \frac{1}{2}|\zeta - w|.\]
Thus for $p\ge2$,
\begin{align*} 
    \|F\|_{B_p(\Gamma)}^p &\ge \int_{\Gamma^*_{4}} \int_{\Gamma_{2}} \frac{|\zeta - \eta|^{p-2}}{|\zeta - w|^p|\eta - w|^p} |d\eta|  |d\zeta| \\
    &\ge 2^{2-p} \int_{\Gamma^*_{4}} \frac{|d\zeta| }{|\zeta - w|^2} \int_{\Gamma_{2}} \frac{|d\eta|}{|\eta - w|^p}  \\
    &\gtrsim \delta(w)^{1-p} \int_{\Gamma^*_{4}} \frac{|d\zeta|}{|\zeta - w|^2}.
\end{align*}
Again, 
\begin{equation} \label{eq:far_2}
    \int_{\Gamma^*_{4}} \frac{|d\zeta|}{|\zeta - w|^2}  \lesssim \delta(w)^{-1}.
\end{equation}
Combining \eqref{eq:far_2} with \eqref{near_2}  for $w\in\Omega^-$ ensures  \eqref{Eq: Meyer-David-Integral} holds. 

In all cases,  we can get \eqref{Eq: Meyer-David-Integral}  for $w\in\Omega^+$ by a similar argument.   This completes the proof of Theorem \ref{3.4}.
\end{proof}

Finally, we end the paper with some remarks. 
\begin{remark}
    While Theorem \ref{3.4} is proved for unbounded domains, the bounded case follows naturally since all relevant spaces and domains are M\"obius invariant.
\end{remark}

\begin{remark}
   By a  different approach    for $p > 2$,  the recent preprint \cite[Theorem 4]{WeiZin2024p} also obtained  $$\|F\|^p_{B_p(\Gamma)}\gtrsim\delta(w)^{1-p}\int_{\Gamma}\frac{|d\zeta|}{|\zeta-w|^{2}}.$$ 
\end{remark}
\vspace{0.5cm}

\subsection*{Acknowledgements}
This work was supported by the National Natural Science Foundation of China under Grants Nos. 12401095 (T. Liu), 12571083 (Y. Shen), and 12526204 (Y. Yang).
\subsection*{Declarations}

\textbf{Conflict of interest}: On behalf of all authors, the corresponding author states that there is no conflict of interest.\\

{ \footnotesize 
	\bibliography{pgeq1}
	\bibliographystyle{plain}}

\end{document}